\documentclass[12pt,a4paper]{article}
\usepackage[utf8]{inputenc}
\usepackage{amsmath, amssymb, amsthm}
\usepackage{geometry}
\usepackage{hyperref}

\newtheorem{theorem}{Theorem}
\newtheorem{lemma}{Lemma}
\theoremstyle{definition}
\newtheorem{definition}{Definition}

\title{New Binomial Identities for Fibonacci, Lucas, and Generalized Fibonacci Sequences with Multiple Indices}
\author{Nick(s) Vorobtsov\thanks{Independent Researcher. Email: nvvorobtsov@mail.ru}}
\date{}

\begin{document}

\maketitle

\begin{abstract}
This paper presents new identities expressing the terms of Fibonacci, Lucas, and generalized Fibonacci sequences with multiple indices through powers of Lucas numbers and binomial coefficients. The obtained formulas rely on the application of symmetric polynomials (Waring's formulas) to the classical Binet's formula. Particular attention is given to the binomial expansion for the generalized Fibonacci sequence, which structurally combines two adjacent binomial coefficients from Pascal's triangle.
\end{abstract}

\section{Introduction}
The study of identities involving Fibonacci ($F_n$) and Lucas ($L_n$) numbers is a classic topic in number theory. Recent developments have shown that many such identities can be generalized using various combinatorial and algebraic methods. 

Similar identities for balancing and Lucas-balancing polynomials were obtained in [1] using generating functions, where Fibonacci and Lucas numbers appear as special cases. In this paper, we provide a direct derivation for Fibonacci and Lucas sequences by applying Waring's formulas for symmetric polynomials to Binet’s formula, offering a different combinatorial perspective and emphasizing the connection with Pascal’s triangle.

The Fibonacci and Lucas sequences are among the most well-known and widely studied objects in combinatorics and number theory. Their properties find applications in cryptography, algorithm theory, geometry, and the analysis of natural structures. One of the classical problems in the theory of linear recurrence sequences is finding relationships between elements with multiple indices (e.g., $F_{nm}$) and base elements ($F_n$, $L_n$). 

The motivation for this research was the search for universal polynomial representations for generalized Fibonacci numbers. Expressing $F_{nm}$, $L_{nm}$, and $G_{nm}$ as sums whose coefficients are binomial coefficients reveals new number-theoretic properties, explicitly connecting recurrence sequences with Chebyshev polynomials. The transition from recursive computations to explicit combinatorial sums allows for a deeper investigation of the algebraic structure of these sequences. In this paper, three new identities are rigorously proven. In particular, a formula for the generalized Fibonacci sequence is obtained, reduced to a symmetric form by applying an appropriate index shift.

\section{Basic Definitions and Lemmas}

\begin{definition}
The \textbf{Fibonacci sequence} $\{F_n\}$ is defined by the recurrence relation: $F_0 = 0, F_1 = 1$, and $F_n = F_{n-1} + F_{n-2}$ for $n \ge 2$.
\end{definition}

\begin{definition}
The \textbf{Lucas sequence} $\{L_n\}$ is defined by the relation: $L_0 = 2, L_1 = 1$, and $L_n = L_{n-1} + L_{n-2}$ for $n \ge 2$.
\end{definition}

\begin{definition}
The \textbf{generalized Fibonacci sequence} $\{G_n\}$ is defined by arbitrary initial conditions $G_0, G_1$, and the recurrence relation $G_n = G_{n-1} + G_{n-2}$ for $n \ge 2$.
\end{definition}

To prove the main theorems, we use the following classical results.

\begin{lemma}[Binet's Formula \cite{koshy2001}]
\label{lem:binet}
Let $\alpha = \frac{1 + \sqrt{5}}{2}$ and $\beta = \frac{1 - \sqrt{5}}{2}$ be the roots of the characteristic equation $x^2 - x - 1 = 0$. Then the explicit formulas are valid:
\begin{equation}
F_n = \frac{\alpha^n - \beta^n}{\sqrt{5}}, \quad L_n = \alpha^n + \beta^n.
\end{equation}
Since $\alpha \beta = -1$, the identity $\alpha^n \beta^n = (-1)^n$ holds.
\end{lemma}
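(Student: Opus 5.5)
The plan is to prove Binet's formula by induction on $n$. The whole argument rests on the single fact that $\alpha$ and $\beta$ are roots of $x^2 - x - 1 = 0$.

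\emph{Root identities.} First I would verify directly that $\alpha^2 = \alpha + 1$ and $\beta^2 = \beta + 1$. By Vieta's formulas for $x^2-x-1$, we also have
\[
\alpha+\beta = 1, \qquad \alpha\beta = -1, \qquad \alpha-\beta = \sqrt{5}.
\]
Multiplying $\alpha^2 = \alpha+1$ by $\alpha^{n-2}$ gives $\alpha^n = \alpha^{n-1} + \alpha^{n-2}$ for every $n \ge 2$, and the same holds for $\beta$. Therefore any sequence of the form $c_1\alpha^n + c_2\beta^n$ satisfies the Fibonacci recurrence.

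\emph{Induction.} Set $a_n = (\alpha^n - \beta^n)/\sqrt{5}$ and $b_n = \alpha^n + \beta^n$, and check the initial values. For $a_n$: $a_0 = 0$ and $a_1 = (\alpha-\beta)/\sqrt{5} = 1$. For $b_n$: $b_0 = 2$ and $b_1 = \alpha+\beta = 1$. So $a_n$ agrees with $F_n$, and $b_n$ with $L_n$, at $n=0,1$. All four sequences obey the same second-order recurrence, so strong induction on $n$ gives $a_n = F_n$ and $b_n = L_n$ for all $n \ge 0$. The inductive step just adds the identities for $n-1$ and $n-2$ and uses the recurrence from the previous paragraph.

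\emph{Final claim.} The identity $\alpha^n\beta^n = (\alpha\beta)^n = (-1)^n$ follows at once from $\alpha\beta=-1$, by commutativity of multiplication in $\mathbb{R}$.

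No step is a genuine obstacle; this is a classical lemma. The only point needing care is the base case, checking that $\alpha - \beta = \sqrt{5}$ so that the normalization gives exactly $F_1 = 1$.
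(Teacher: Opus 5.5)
Your proof is correct and complete. The paper does not prove this lemma itself but only cites Koshy, and your argument (the root identities $\alpha^2=\alpha+1$ and $\beta^2=\beta+1$, matching initial values at $n=0,1$, then strong induction) is the standard textbook proof behind that citation, valid for all $n \ge 0$, which is exactly the range on which the paper defines $F_n$ and $L_n$.
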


\begin{lemma}[Waring's Formulas \cite{lidl1997}]
\label{lem:waring}
For any variables $x$ and $y$, their sum $S = x+y$ and product $P = xy$, the following identities express the sum of equal powers of $x$ and $y$ through elementary symmetric polynomials (establishing a connection with Chebyshev polynomials of the first and second kinds):
\begin{equation}
\label{eq:waring_2}
\frac{x^m - y^m}{x - y} = \sum_{i=0}^{\lfloor \frac{m-1}{2} \rfloor} (-1)^i \binom{m - 1 - i}{i} S^{m - 1 - 2i} P^i,
\end{equation}
\begin{equation}
\label{eq:waring_1}
x^m + y^m = \sum_{i=0}^{\lfloor m/2 \rfloor} (-1)^i \frac{m}{m - i} \binom{m - i}{i} S^{m - 2i} P^i.
\end{equation}
\end{lemma}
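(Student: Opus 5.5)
We prove both identities in Lemma~\ref{lem:waring} by induction on $m$, using the fact that both sides satisfy the same second-order linear recurrence in $m$.

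\textbf{Recurrence for the left-hand sides.} Write $U_m = \frac{x^m-y^m}{x-y}$ (as a polynomial in $x,y$) and $V_m = x^m+y^m$. Since $x$ and $y$ are roots of $t^2 - St + P = 0$, we have $x^{m} = S x^{m-1} - P x^{m-2}$, and likewise for $y$. Hence
\[
U_m = S U_{m-1} - P U_{m-2}, \qquad V_m = S V_{m-1} - P V_{m-2},
\]
with initial values $U_0=0$, $U_1=1$, $V_0=2$, $V_1=S$. Because $x-y$ divides $x^m-y^m$, the identities are polynomial identities, so no assumption $x\neq y$ is needed.

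\textbf{The identity for $U_m$.} Let $A_m$ denote the right-hand side of \eqref{eq:waring_2}, read with the convention $\binom{n}{i}=0$ for $i>n$ or $i<0$. Then:
\begin{itemize}
\item[(i)] $A_1=1$, and $A_2=S$, which agrees with $U_2=x+y$.
\item[(ii)] Compare coefficients of $S^{m-1-2i}P^i$ in $S A_{m-1} - P A_{m-2}$. The first term contributes $(-1)^i\binom{m-2-i}{i}$. The second contributes $-(-1)^{i-1}\binom{m-2-i}{i-1}=(-1)^i\binom{m-2-i}{i-1}$, after shifting the index $i\mapsto i-1$.
\item[(iii)] Pascal's rule gives $\binom{m-2-i}{i}+\binom{m-2-i}{i-1}=\binom{m-1-i}{i}$, so $A_m = SA_{m-1}-PA_{m-2}$.
\end{itemize}
Since $A_m$ and $U_m$ share two initial values and the recurrence, they agree for all $m\ge1$.

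\textbf{The identity for $V_m$.} Rather than running a second coefficient computation, derive it from the first via $V_m = U_{m+1} - P U_{m-1}$. This holds because
\[
x^{m+1}-y^{m+1} - xy\,(x^{m-1}-y^{m-1}) = x^m(x-y) + y^m(x-y).
\]
Substitute \eqref{eq:waring_2} for $U_{m+1}$ and $U_{m-1}$. The coefficient of $S^{m-2i}P^i$ becomes
\[
(-1)^i\left[\binom{m-i}{i}+\binom{m-i-1}{i-1}\right].
\]
It remains to check the identity
\[
\binom{m-i}{i}+\binom{m-i-1}{i-1} = \frac{m}{m-i}\binom{m-i}{i}.
\]
Using $\binom{m-i-1}{i-1} = \frac{i}{m-i}\binom{m-i}{i}$, the left side equals $\binom{m-i}{i}\left(1+\frac{i}{m-i}\right)$, which is the right side.

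\textbf{Main obstacle.} The only delicate point is the boundary bookkeeping. The summation ranges $\lfloor (m-1)/2\rfloor$ versus $\lfloor m/2\rfloor$ must match after the index shifts. This is handled by the zero convention for binomials, together with checking the top index separately when $m$ is even: there $i=m/2$, and the coefficient is $2(-1)^{m/2}$, which matches $\frac{m}{m/2}\binom{m/2}{m/2}=2$. The case $m=0$ is excluded because the factor $\frac{m}{m-i}$ is only meaningful for $m\ge1$; the identity holds for all $m\ge1$.
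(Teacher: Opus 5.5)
Your proof is correct. The paper does not prove this lemma; it only cites Lidl--Niederreiter. There, Waring's formula is the general $n$-variable expansion of the power sums $x_1^k+\cdots+x_n^k$ in the elementary symmetric polynomials. It is a sum over $i_1+2i_2+\cdots+ni_n=k$ with coefficients $(-1)^{i_2+i_4+\cdots}\,k\,(i_1+\cdots+i_n-1)!/(i_1!\cdots i_n!)$. With two variables, $i_2=i$ and $i_1=m-2i$, this gives $(-1)^i\frac{m\,(m-i-1)!}{i!\,(m-2i)!}=(-1)^i\frac{m}{m-i}\binom{m-i}{i}$, so the power-sum identity follows directly.

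The quotient identity, however, is not a power sum: it is the complete homogeneous polynomial $h_{m-1}(x,y)$, the second-kind Chebyshev-type polynomial. Waring's formula proper therefore does not literally cover it, and it needs its own argument, for example expanding $1/(1-St+Pt^2)$, or one like yours.

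You go in the opposite direction and stay elementary. The common recurrence plus Pascal's rule gives the quotient identity. Then $V_m=U_{m+1}-PU_{m-1}$ together with $\binom{m-i-1}{i-1}=\frac{i}{m-i}\binom{m-i}{i}$ yields the power-sum identity. The cited route buys generality (any number of variables). Yours buys a self-contained proof of exactly the two formulas the paper uses, and it makes explicit the hypothesis $m\ge1$ that the lemma's statement omits.

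Your boundary bookkeeping is also sound. For $m\ge3$ and $0\le i\le\lfloor (m-1)/2\rfloor$ the upper index $m-2-i$ is nonnegative, so Pascal's rule is valid under your zero convention. The $m=1$ case of the $V_m$ step only uses $U_0=0$, which equals the empty sum.
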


\begin{lemma}[d'Ocagne's Identity \cite{koshy2001}]
\label{lem:docagne}
For any integers $a$ and $b$, the following relation holds for Fibonacci numbers:
\begin{equation}
F_a F_{b-1} - F_{a-1} F_b = (-1)^{a+1} F_{b-a}.
\end{equation}
\end{lemma}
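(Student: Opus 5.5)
We would prove d'Ocagne's identity directly from Binet's formula (Lemma~\ref{lem:binet}), which holds for all integers $n$ once $F_n$ is extended to negative indices by the recurrence; equivalently $\beta^{-1}=-\alpha$ gives $F_{-n}=(-1)^{n+1}F_n$. The plan is to substitute $F_k=(\alpha^k-\beta^k)/\sqrt5$ into every term on the left and expand. This gives
\[
5\bigl(F_aF_{b-1}-F_{a-1}F_b\bigr)=(\alpha^a-\beta^a)(\alpha^{b-1}-\beta^{b-1})-(\alpha^{a-1}-\beta^{a-1})(\alpha^b-\beta^b).
\]

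Expanding the products, the pure terms $\alpha^{a+b-1}$ and $\beta^{a+b-1}$ appear once with each sign and cancel. What remains are the mixed terms:
\[
-\alpha^a\beta^{b-1}-\beta^a\alpha^{b-1}+\alpha^{a-1}\beta^b+\beta^{a-1}\alpha^b .
\]
We group these as
\[
\alpha^{a-1}\beta^{a-1}\bigl(\alpha^{b-a+1}-\alpha\beta^{b-a}\bigr)+\alpha^{a-1}\beta^{a-1}\bigl(\beta^{b-a+1}-\beta\alpha^{b-a}\bigr),
\]
where we have pulled out the common factor $(\alpha\beta)^{a-1}=(-1)^{a-1}$. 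Each bracket rewrites in the same way:
\[
\alpha^{b-a+1}-\alpha\beta^{b-a}=\alpha(\alpha^{b-a}-\beta^{b-a}),\qquad \beta^{b-a+1}-\beta\alpha^{b-a}=-\beta(\alpha^{b-a}-\beta^{b-a}).
\]
Their sum is therefore $(\alpha-\beta)(\alpha^{b-a}-\beta^{b-a})=\sqrt5\cdot\sqrt5\,F_{b-a}=5F_{b-a}$.

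Dividing by $5$ gives $F_aF_{b-1}-F_{a-1}F_b=(-1)^{a-1}F_{b-a}=(-1)^{a+1}F_{b-a}$, which is the claim.

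The only delicate point is the bookkeeping of signs when factoring out $(\alpha\beta)^{a-1}$. We would check this against the small case $a=1$, $b=2$: the left side is $F_1F_1-F_0F_2=1$ and the right side is $(-1)^2F_1=1$. We would also remark that for negative indices the identity holds because Binet's formula persists for all integers. An alternative route would be induction on $b$ using the recurrence, but the Binet computation is shorter and uniform in $a$ and $b$.
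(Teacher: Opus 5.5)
Your proof is correct. The expansion, the cancellation of the $\alpha^{a+b-1}$ and $\beta^{a+b-1}$ terms, the grouping with the common factor $(\alpha\beta)^{a-1}=(-1)^{a-1}$, and the final identity $(\alpha-\beta)(\alpha^{b-a}-\beta^{b-a})=5F_{b-a}$ all check out.

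The paper gives no proof of this lemma. It only cites it as a classical result from Koshy, so yours is a supplied argument rather than an alternative one. Its advantage is that it makes the lemma self-contained with the tools the paper already relies on everywhere else, namely Binet's formula and $\alpha\beta=-1$.

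Keep your remark on negative indices. The paper defines $F_n$ only for $n\ge 0$, yet states the lemma ``for any integers $a$ and $b$.'' If you retain that generality, add one line of justification for extending Binet's formula to all integers. The relations $\alpha^{k+2}=\alpha^{k+1}+\alpha^k$ and $\beta^{k+2}=\beta^{k+1}+\beta^k$ hold for every integer $k$. So both sides of Binet's formula satisfy the recurrence in both directions and agree at $k=0,1$.

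For the paper's only use of the lemma ($a=n$, $b=nm$ with $n,m\ge 1$, in Theorem~\ref{thm:general}), the indices $a-1$, $b-1$ and $b-a=n(m-1)$ are all nonnegative. So the extension is not actually needed there.
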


\section{Main Results}

\begin{theorem} \label{thm:fib}
For any integers $n > 0$ and $m > 0$, the following identity holds:
\begin{equation}
F_{nm} = F_n \sum_{i=0}^{\lfloor \frac{m-1}{2} \rfloor} \binom{m - 1 - i}{i} L_n^{m-1-2i} (-1)^{i(n+1)}.
\end{equation}
\end{theorem}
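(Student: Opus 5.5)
The plan is to apply the first Waring formula \eqref{eq:waring_2} from Lemma~\ref{lem:waring} with $x = \alpha^n$ and $y = \beta^n$, and then translate everything back into Fibonacci and Lucas numbers via Binet's formula (Lemma~\ref{lem:binet}).

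\textbf{Step 1 (the left-hand side).} By Lemma~\ref{lem:binet},
\[
F_{nm} = \frac{\alpha^{nm}-\beta^{nm}}{\sqrt5}
= \frac{\alpha^n-\beta^n}{\sqrt5}\cdot\frac{x^m-y^m}{x-y}
= F_n \cdot \frac{x^m-y^m}{x-y}.
\]
This factorization is legitimate because $n>0$ gives $x-y=\alpha^n-\beta^n=\sqrt5\,F_n\neq 0$. Hence the division is well defined, and the quotient $(x^m-y^m)/(x-y)$ is the polynomial $\sum_{k=0}^{m-1} x^{m-1-k}y^k$.

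\textbf{Step 2 (the symmetric functions).} Next I compute $S$ and $P$. Lemma~\ref{lem:binet} gives
\[
S = x+y = \alpha^n+\beta^n = L_n, \qquad P = xy = (\alpha\beta)^n = (-1)^n .
\]

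\textbf{Step 3 (substitution).} Substituting these into \eqref{eq:waring_2} yields
\[
\frac{x^m-y^m}{x-y} = \sum_{i=0}^{\lfloor (m-1)/2\rfloor} (-1)^i \binom{m-1-i}{i} L_n^{m-1-2i} (-1)^{ni}.
\]
The sign factor combines as $(-1)^i(-1)^{ni} = (-1)^{i(n+1)}$. Multiplying by $F_n$ then gives exactly the claimed identity.

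\textbf{Main obstacle.} Since Lemma~\ref{lem:waring} may be assumed, there is essentially no obstacle. The only point needing care is that \eqref{eq:waring_2} is a polynomial identity in $x,y$, with the left side read as $\sum_{k=0}^{m-1} x^{m-1-k}y^k$, so that the specialization to $x=\alpha^n$, $y=\beta^n$ is valid.

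If a self-contained check of \eqref{eq:waring_2} were wanted, I would use induction on $m$. Writing $h_m = (x^m-y^m)/(x-y)$, one has the recurrence $h_{m+1} = S h_m - P h_{m-1}$ with $h_0=0$ and $h_1=1$. The inductive step then reduces to Pascal's rule
\[
\binom{m-i}{i} = \binom{m-1-i}{i} + \binom{m-1-i}{i-1},
\]
applied coefficientwise.
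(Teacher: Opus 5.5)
Your proposal is correct and follows the paper's proof essentially step for step: factor out $F_n$ via Binet's formula, set $x=\alpha^n$, $y=\beta^n$ so that $S=L_n$ and $P=(-1)^n$, apply the first Waring formula, and combine the signs as $(-1)^i(-1)^{ni}=(-1)^{i(n+1)}$. Your additions fill in details the paper leaves implicit and are sound: the check that $x-y=\sqrt5\,F_n\neq 0$, reading \eqref{eq:waring_2} as a polynomial identity, and the inductive derivation of Waring's formula via $h_{m+1}=Sh_m-Ph_{m-1}$ and Pascal's rule.
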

\begin{proof}
We use Binet's formula (Lemma \ref{lem:binet}), according to which $F_{nm} = \frac{\alpha^{nm} - \beta^{nm}}{\sqrt{5}}$, where $\alpha = \frac{1 + \sqrt{5}}{2}$ and $\beta = \frac{1 - \sqrt{5}}{2}$. We introduce notations for the $n$-th powers of the roots: $x = \alpha^n = \left(\frac{1 + \sqrt{5}}{2}\right)^n$ and $y = \beta^n = \left(\frac{1 - \sqrt{5}}{2}\right)^n$. In these notations, $F_{nm} = \frac{x^m - y^m}{\sqrt{5}}$. We factor out the algebraic term $x-y$:
$$ F_{nm} = \frac{x - y}{\sqrt{5}} \cdot \frac{x^m - y^m}{x - y}. $$
Since $\frac{x-y}{\sqrt{5}} = \frac{\alpha^n - \beta^n}{\sqrt{5}} = F_n$, the expression becomes $F_{nm} = F_n \cdot \frac{x^m - y^m}{x - y}$. To express the polynomial $\frac{x^m - y^m}{x - y}$ in terms of the Lucas number $L_n$, note that the sum of the variables $S = x+y = \alpha^n+\beta^n = L_n$, and their product $P = xy = (\alpha\beta)^n = (-1)^n$. Applying the first Waring's formula \eqref{eq:waring_2}:
$$ \frac{x^m - y^m}{x - y} = \sum_{i=0}^{\lfloor \frac{m-1}{2} \rfloor} (-1)^i \binom{m - 1 - i}{i} L_n^{m - 1 - 2i} ((-1)^n)^i. $$
Simplifying the exponent, we combine the signs: $(-1)^i \cdot (-1)^{ni} = (-1)^{i(n+1)}$. Substituting the transformed sum back into the expression for $F_{nm}$ yields the statement of the theorem.
\end{proof}

\begin{theorem} \label{thm:lucas}
For any integers $n > 0$ and $m > 0$, the following identity holds:
\begin{equation}
L_{nm} = \sum_{i=0}^{\lfloor m/2 \rfloor} \frac{m}{m - i} \binom{m - i}{i} L_n^{m - 2i} (-1)^{i(n+1)}.
\end{equation}
\end{theorem}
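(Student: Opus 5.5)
The proof will mirror that of Theorem \ref{thm:fib}, using the second Waring formula \eqref{eq:waring_1} in place of \eqref{eq:waring_2}. By Binet's formula (Lemma \ref{lem:binet}) we have $L_{nm} = \alpha^{nm} + \beta^{nm}$. Introduce the same notation as before, $x = \alpha^n$ and $y = \beta^n$, so that $L_{nm} = x^m + y^m$. No prefactor needs to be split off here, since the Lucas number is already a power sum of the two roots rather than a divided difference.

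Next compute the elementary symmetric functions of $x$ and $y$. The sum is $S = x + y = \alpha^n + \beta^n = L_n$, again by Lemma \ref{lem:binet}. The product is $P = xy = (\alpha\beta)^n = (-1)^n$, using $\alpha\beta = -1$. Substituting these into \eqref{eq:waring_1} gives
$$ L_{nm} = \sum_{i=0}^{\lfloor m/2 \rfloor} (-1)^i \frac{m}{m-i} \binom{m-i}{i} L_n^{m-2i} \bigl((-1)^n\bigr)^i .$$
Combining the signs as $(-1)^i(-1)^{ni} = (-1)^{i(n+1)}$ then yields the stated identity.

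The only point needing care is that \eqref{eq:waring_1} is well defined. The factor $\frac{m}{m-i}$ never has a zero denominator, because $i \le \lfloor m/2 \rfloor < m$ for $m > 0$. Also, $\frac{m}{m-i}\binom{m-i}{i}$ is an integer, since it equals $\binom{m-i}{i} + \binom{m-i-1}{i-1}$ (with the convention $\binom{m-1}{-1} = 0$ when $i = 0$). So the right-hand side is an integer combination of powers of $L_n$, consistent with $L_{nm}$ being an integer. Since Lemma \ref{lem:waring} is assumed, there is no genuine obstacle. If one wanted the proof self-contained, the main work would be verifying \eqref{eq:waring_1}, for instance by induction on $m$ via $x^{m+1} + y^{m+1} = S(x^m + y^m) - P(x^{m-1} + y^{m-1})$ together with a Pascal-type recurrence for the coefficients $\frac{m}{m-i}\binom{m-i}{i}$.
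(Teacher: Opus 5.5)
Your proposal is correct and follows the paper's proof essentially verbatim: Binet's formula with $x=\alpha^n$, $y=\beta^n$, $S=L_n$, $P=(-1)^n$, substitution into the second Waring formula, and the sign combination $(-1)^i(-1)^{ni}=(-1)^{i(n+1)}$. Your extra remarks on the nonzero denominator and on integrality via $\frac{m}{m-i}\binom{m-i}{i}=\binom{m-i}{i}+\binom{m-i-1}{i-1}$ are correct, though the argument does not need them.
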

\begin{proof}
According to Binet's formula (Lemma \ref{lem:binet}), $L_{nm} = \alpha^{nm} + \beta^{nm}$, where $\alpha = \frac{1 + \sqrt{5}}{2}$ and $\beta = \frac{1 - \sqrt{5}}{2}$. Introducing the notations $x = \alpha^n = \left(\frac{1 + \sqrt{5}}{2}\right)^n$ and $y = \beta^n = \left(\frac{1 - \sqrt{5}}{2}\right)^n$, we obtain $L_{nm} = x^m + y^m$. The sum of the $m$-th powers of two variables can be represented as a polynomial of their sum $S = x+y = L_n$ and product $P = xy = (-1)^n$ using the second Waring's formula \eqref{eq:waring_1}. Substituting the values of $S$ and $P$, we get:
$$ L_{nm} = \sum_{i=0}^{\lfloor m/2 \rfloor} (-1)^i \frac{m}{m - i} \binom{m - i}{i} L_n^{m - 2i} ((-1)^n)^i. $$
Performing the sign transformation $(-1)^i \cdot (-1)^{ni} = (-1)^{i(n+1)}$ completes the proof of the theorem.
\end{proof}

\begin{theorem} \label{thm:general}
For the generalized Fibonacci sequence $\{G_n\}$, given $n > 0$ and $m > 0$, the following binomial identity holds:
\begin{equation}
\begin{split}
G_{nm} &= G_n \sum_{i=0}^{\lfloor \frac{m-1}{2} \rfloor} \binom{m - 1 - i}{i} L_n^{m - 2i - 1} (-1)^{i(n+1)} \\
       &+ G_0 \sum_{i=1}^{\lfloor \frac{m}{2} \rfloor} \binom{m - 1 - i}{i - 1} L_n^{m - 2i} (-1)^{i(n+1)}.
\end{split}
\end{equation}
\end{theorem}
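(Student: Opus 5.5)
The plan is to reduce Theorem~\ref{thm:general} to the same Waring-type expansion used in Theorem~\ref{thm:fib}. The key input is a two-term decomposition
\[
G_{nm} = G_n\, U_{m-1} - (-1)^n G_0\, U_{m-2},
\qquad
U_k := \frac{x^{k+1}-y^{k+1}}{x-y},
\]
where $x=\alpha^n$ and $y=\beta^n$, with the convention $U_{-1}=0$. Note that $U_0=1$, and that each $U_k$ is exactly the quotient expanded by \eqref{eq:waring_2} (with $m$ replaced by $k+1$), using $S=L_n$ and $P=(-1)^n$.

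\textbf{Step 1 (Binet form for $G$).} Since $G$ satisfies the Fibonacci recurrence, I will write $G_k = A\alpha^k + B\beta^k$ for suitable constants $A,B$; these are determined by $G_0, G_1$ because $\alpha\neq\beta$. Hence
\[
G_{nm}=Ax^m+By^m,\qquad G_n=Ax+By,\qquad G_0=A+B.
\]

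\textbf{Step 2 (power reduction).} For any $x\neq y$ and $m\ge 1$ one has the identity
\[
x^m = x\,U_{m-1} - xy\,U_{m-2},
\]
and the same identity holds with $y$ in place of $x$ on the left and in the first term. To verify it, multiply by $x-y$: the right side becomes
\[
x(x^m-y^m) - xy(x^{m-1}-y^{m-1}) = x^{m+1}-x^m y,
\]
which equals $x^m(x-y)$. Substituting into $Ax^m+By^m$ gives
\[
G_{nm} = (Ax+By)\,U_{m-1} - xy\,(A+B)\,U_{m-2} = G_n U_{m-1} - (-1)^n G_0 U_{m-2},
\]
using $xy=(-1)^n$ from Lemma~\ref{lem:binet}. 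As a sanity check, $m=2$ gives the familiar relation $G_{2n}=L_nG_n-(-1)^nG_0$.

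\textbf{Step 3 (expansion and reindexing).} The first term expands exactly as in the proof of Theorem~\ref{thm:fib} and gives the first sum. For the second term, I apply \eqref{eq:waring_2} with $m-1$ in place of $m$:
\[
-(-1)^n U_{m-2} = \sum_{j=0}^{\lfloor (m-2)/2\rfloor} -(-1)^n(-1)^{j(n+1)}\binom{m-2-j}{j}L_n^{m-2-2j}.
\]
Now shift the index by $i=j+1$. The sign becomes
\[
(-1)^{1+n+j(n+1)} = (-1)^{(j+1)(n+1)} = (-1)^{i(n+1)},
\]
and the remaining pieces transform as follows:
\begin{itemize}
\item the binomial coefficient becomes $\binom{m-1-i}{i-1}$;
\item the power of $L_n$ becomes $L_n^{m-2i}$;
\item the upper limit $\lfloor (m-2)/2\rfloor+1$ becomes $\lfloor m/2\rfloor$.
\end{itemize}
This is precisely the second sum in the statement. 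For $m=1$ that sum is empty, which matches $U_{-1}=0$.

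The only delicate points are the sign bookkeeping and the edge cases in the index shift. Both are handled by the explicit computation above.
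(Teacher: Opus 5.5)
Your argument is correct (the only hypothesis you use, $x\neq y$, holds because $x-y=\sqrt{5}\,F_n\neq 0$ for $n>0$). It reaches the same intermediate identity as the paper, $G_{nm} = G_n U_{m-1} + (-1)^{n+1} G_0 U_{m-2}$, where $U_{m-1} = F_{nm}/F_n$ and $U_{m-2} = F_{n(m-1)}/F_n$. From there your Waring expansion and the shift $i=j+1$ match the paper's step for step.

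The two proofs differ in how they obtain that identity. The paper stays with integer Fibonacci identities:
\begin{itemize}
\item it writes $G_k = G_1F_k + G_0F_{k-1}$ and eliminates $G_1$ using the same relation at $k=n$;
\item it then needs d'Ocagne's identity to turn the $G_0$-coefficient $F_nF_{nm-1} - F_{n-1}F_{nm}$ into $(-1)^{n+1}F_{n(m-1)}$;
\item finally it invokes Theorem~\ref{thm:fib} twice as a black box.
\end{itemize}
You instead use the Binet form $G_k = A\alpha^k + B\beta^k$ and the power-reduction identity $x^m = xU_{m-1} - xyU_{m-2}$. The combinations $Ax+By = G_n$ and $A+B = G_0$ then appear automatically, and you need no auxiliary identity such as d'Ocagne's.

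What your route buys:
\begin{itemize}
\item It is more self-contained.
\item It handles $m=1$ transparently through $U_{-1}=0$. The paper tacitly applies Theorem~\ref{thm:fib} with parameter $m-1=0$, outside that theorem's stated range; this is harmless only because both sides vanish.
\item It carries over unchanged to any recurrence $G_k = pG_{k-1} + qG_{k-2}$ with distinct characteristic roots, with $L_n$ replaced by $\alpha^n+\beta^n$ and $(-1)^n$ by $(-q)^n$.
\end{itemize}
The paper's route has the merit of reusing its earlier theorem directly and staying within classical Fibonacci identities.
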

\begin{proof}
An arbitrary term of the generalized sequence $G_k$ can be expressed through base Fibonacci numbers by the formula $G_k = G_1 F_k + G_0 F_{k-1}$, which is easily proven by induction. Applying this property to the index $nm$, we have:
$$ G_{nm} = G_1 F_{nm} + G_0 F_{nm-1}. $$
From the analogous relation for step $n$, it follows that $G_n = G_1 F_n + G_0 F_{n-1}$. We express $G_1$ from this: $G_1 = \frac{G_n - G_0 F_{n-1}}{F_n}$. Substituting $G_1$ into the expression for $G_{nm}$, we obtain:
$$ G_{nm} = \frac{G_n - G_0 F_{n-1}}{F_n} \cdot F_{nm} + G_0 F_{nm-1}. $$
Expanding the brackets and grouping the terms with $G_n$ and $G_0$, we find:
$$ G_{nm} = G_n \left( \frac{F_{nm}}{F_n} \right) + G_0 \left( \frac{F_n F_{nm-1} - F_{n-1} F_{nm}}{F_n} \right). $$
Consider the first term. The ratio $\frac{F_{nm}}{F_n}$ corresponds exactly to the sum proven in Theorem \ref{thm:fib}.
For the second term, consider the expression $F_n F_{nm-1} - F_{n-1} F_{nm}$. Applying d'Ocagne's identity (Lemma \ref{lem:docagne}) with $a=n$ and $b=nm$, the numerator transforms into $(-1)^{n+1} F_{nm-n} = (-1)^{n+1} F_{n(m-1)}$. Thus, the second term takes the form:
$$ G_0 \cdot (-1)^{n+1} \cdot \frac{F_{n(m-1)}}{F_n}. $$
Using Theorem \ref{thm:fib} for the expansion of $\frac{F_{n(m-1)}}{F_n}$ (replacing parameter $m$ with $m-1$), we have:
$$ \frac{F_{n(m-1)}}{F_n} = \sum_{j=0}^{\lfloor \frac{m-2}{2} \rfloor} \binom{m - 2 - j}{j} L_n^{m - 2 - 2j} (-1)^{j(n+1)}. $$
We bring the multiplier $(-1)^{n+1}$ inside the sum. Since $(-1)^{n+1} \cdot (-1)^{j(n+1)} = (-1)^{(j+1)(n+1)}$, we obtain:
$$ G_0 \sum_{j=0}^{\lfloor \frac{m-2}{2} \rfloor} \binom{m - 2 - j}{j} L_n^{m - 2 - 2j} (-1)^{(j+1)(n+1)}. $$
To bring both sums to a unified structural form, we perform a shift of the summation index by setting $i = j + 1$ (hence $j = i - 1$). The summation limits change to $i \in [1, \lfloor \frac{m}{2} \rfloor]$. The binomial coefficient takes the form $\binom{m-1-i}{i-1}$, the exponent of the Lucas number $L_n$ is $m - 2 - 2(i-1) = m - 2i$, and the sign multiplier becomes $(-1)^{i(n+1)}$. As a result, the second term exactly matches the second sum of the desired formula. The theorem is proven.
\end{proof}

\section{Conclusion}
In this paper, explicit binomial identities for computing multiple-index terms $F_{nm}, L_{nm}$, and $G_{nm}$ via $L_n$ are rigorously proven. It is shown that the application of Waring's formulas to the classical Binet's formula allows bypassing recursive computations and obtaining direct combinatorial representations. The transition to the binomial form in the generalized Fibonacci sequence is particularly notable: algebraic transformations demonstrate the reduction of two independent sums to the use of adjacent binomial coefficients. The obtained results can be applied to the algorithmic optimization of computations and are of independent interest in the theory of symmetric polynomials and combinatorics. Interactive materials, bilingual pages, and related publications are available on the author's website \cite{vorobtsov_portfolio}.

\end{document}